\title{Leaf Management}
\author{Jeffry L.~Hirst}
\date {December 23, 2018}
\setlist[enumerate]{label=\rm{(\arabic*)}, ref=\arabic*}
\theoremstyle{plain}
\newtheorem{thm}{Theorem}
\newtheorem{coro}[thm]{Corollary}  
\newtheorem{lemma}[thm]{Lemma}
\newtheorem{prop}[thm]{Proposition}
\newcommand{\nat}{\mathbb N}
\newcommand{\rca}{{\sf{RCA}}_0}
\newcommand{\aca}{{\sf{ACA}}_0}
\newcommand{\atr}{{\sf{ATR}}_0}
\newcommand{\poo}{\Pi^1_1 {\rm -} {\sf{CA}}_0}
\newcommand{\wf}{{\sf{WF}}}
\newcommand{\wfh}{\widehat{\sf{WF}}}
\newcommand{\ltw}{<_{\rm W}}
\newcommand{\lews}{\leq_{\rm sW}}
\newcommand{\ltws}{<_{\rm sW}}
\newcommand{\eqws}{\equiv_{\rm sW}}
\newcommand{\lpo}{{\sf{LPO}}}
\newcommand{\ssep}{{\Sigma^1_1 {\rm -} {\sf{SEP}}}}
\newcommand{\sca}{{\Sigma^1_1}{\rm -}{\sf{CA}}^-}
\newcommand{\ptt}{{{\sf{PTT}}_1}}
\newcommand{\pk}{{\sf{PK}}}
\newcommand{\hpc}{{\sf{HPC}}}
\newcommand{\hpch}{\widehat{\sf{\hpc}}}
\newcommand{\leaf}[1]{{\sf{leaf}}(#1)}
\newcommand{\wleaf}{\sf{leaf}}
\newcommand{\cat}{^\frown}
\newcommand{\wbl}{{\sf{bleaf}}}
\newcommand{\lpoh}{\widehat\lpo}
\newcommand{\lh}[1]{{|#1{}|}}
\newcommand{\mdot}{{\frac{\cdot}{~~}}}
\begin{document}

\maketitle

\begin{abstract}
Finding the set of leaves for an unbounded tree is a nontrivial process
in both the Weihrauch and reverse mathematics settings.
Despite this, many combinatorial principles for trees are equivalent
to their restrictions to trees with leaf sets.  For example, let $\wfh$ denote
the problem of choosing which trees in a sequence are well-founded, and let
$\pk$ denote the problem of finding the perfect kernel of a tree.  Let
$\wfh_L$ and $\pk_L$ denote the restrictions of these principles to trees with
leaf sets.  Then $\wfh$, $\wfh_L$, $\pk$, and $\pk_L$ are all equivalent to $\poo$
over $\rca$, and all strongly Weihrauch equivalent.
\end{abstract}

\subsubsection*{Introduction}
The first section of this paper shows that for unbounded trees, finding leaf sets
is a nontrivial process.  The second section describes an algorithm for transforming trees
into trees with leaf sets in such a way that properties related to infinite paths and perfect subtrees
are preserved.  The main equivalence results are presented in this section.  The paper closes
with a section containing an application to hypergraphs, where the use of a combinatorial
principle restricted to sequences of trees with leaf sets is central to the proof of a Weihrauch
equivalence.

All relevant background information on reverse mathematics can be found in Simpson's text \cite{simpson}.
For background on Weihrauch analysis, see the work of
Brattka, Gherardi, and Pauly \cite{bgp}.

\subsubsection*{Leaf sets}

In second order arithmetic settings, a tree is encoded by a set of finite sequences of natural numbers that is closed under initial subsequences.
For any finite sequence $\sigma$, let $\lh{\sigma}$ denote the length of $\sigma$.
A leaf in a tree is a sequence that has no extensions in the tree.  For a tree $T$, let $\leaf{T}$ denote the set of leaves of $T$.
A function $b: \nat \to \nat$ is a bounding function for $T$ if for every $\sigma \in T$ and for every $i < \lh{\sigma}$, $\sigma (i) \le b(i)$.
If a tree $T$ has a bounding function, little set comprehension is required to calculate $\leaf{T}$.

\begin{prop}
$(\rca )$  If $b$ is a bounding function for the tree $T$, then $\leaf{T}$ exists.
\end{prop}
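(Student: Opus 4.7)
The plan is to exhibit $\leaf{T}$ as the solution set of a bounded (hence $\Delta^0_1$) formula in the parameters $T$ and $b$, and then apply $\Delta^0_1$ comprehension available in $\rca$.

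First I would observe the key reduction: a sequence $\sigma \in T$ is a leaf exactly when it has no immediate successor in $T$. For if $\sigma$ had any proper extension $\tau \in T$, then by closure of $T$ under initial subsequences the one-step extension $\sigma \cat \langle \tau(\lh{\sigma})\rangle$ would also lie in $T$. Second, the bounding function collapses the search for an immediate successor to a finite set: if $\sigma\cat\langle j\rangle \in T$, then $j \le b(\lh{\sigma})$. Combining these two observations,
\[
\sigma \in \leaf{T} \iff \sigma \in T \;\wedge\; \forall j \le b(\lh{\sigma}) \; \bigl( \sigma \cat \langle j\rangle \notin T\bigr).
\]

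This defining condition is $\Sigma^0_0$ in $T$ and $b$ (the universal quantifier is bounded, and membership in $T$ is decidable from $T$ as a set parameter), hence $\Delta^0_1$. Recursive comprehension in $\rca$ then yields the set $\leaf{T}$.

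There is no substantive obstacle here; the only point that requires any care is justifying that an immediate successor suffices, which is immediate from the tree closure axiom, and that the search range $j \le b(\lh{\sigma})$ is correct, which is immediate from the definition of a bounding function.
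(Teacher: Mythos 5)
Your proof is correct and takes essentially the same route as the paper's: reduce leafhood to the absence of an immediate successor, bound the search by the bounding function, and apply recursive comprehension to the resulting $\Delta^0_1$ definition. The only (cosmetic) difference is the index at which $b$ is evaluated --- you use $b(\lh{\sigma})$ where the paper writes $b(\lh{\sigma}+1)$ --- and your choice is the one that matches the stated definition of a bounding function, since the new entry $j$ of $\sigma \cat j$ sits at position $\lh{\sigma}$.
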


\begin{proof}
Working in $\rca$, the sequence $\sigma$ is a leaf of $T$ if and only if $\sigma \in T$ and
for each $j \le b (\lh{\sigma}+1)$ we have $\sigma \cat j \notin T$.  Thus, the set $\leaf{T}$
is computable using $T$ and $b$ as parameters, and exists by recursive comprehension.
\end{proof}

Let $\wbl$ denote the Weihrauch problem that accepts a tree $T$ and a bounding function $b$ as inputs
and outputs the set $\leaf{T}$.  For a bounded tree, the preceding proof describes a process for computing
the leaf set.  Consequently, $\wbl$ is at the lowest level of the strong Weihrauch hierarchy, as stated in the following
proposition.

\begin{prop}
$\wbl \eqws 0$.
\end{prop}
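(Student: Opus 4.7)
The plan is to observe that $\wbl$ is uniformly computable, since the strong Weihrauch degree $0$ consists precisely of the uniformly computable problems. The previous proposition has essentially already done the work; one only needs to note that the argument there is uniform in the input pair $(T,b)$.

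Concretely, to witness $\wbl \lews 0$ I would use the $\Delta^0_1$ defining formula extracted from the previous proof,
\[
\sigma \in \leaf{T} \iff \sigma \in T \wedge (\forall j \le b(\lh{\sigma}+1))\, \sigma \cat j \notin T,
\]
which gives a single Turing functional that takes $(T,b)$ as oracle and produces $\leaf{T}$ directly. For the reverse direction $0 \lews \wbl$, I would map every instance of $0$ to a fixed valid input of $\wbl$, say $T=\{\langle\rangle\}$ together with the identically zero bounding function, and discard the returned leaf set on the way back.

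I do not anticipate any real obstacle; this proposition is essentially a remark. Its role is to flag, before the subsequent sections take up the unbounded case, that a bounding function trivializes the leaf-set problem in the strong Weihrauch hierarchy.
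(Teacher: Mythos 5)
Your proposal is correct and matches the paper's (implicit) argument exactly: the paper proves the proposition simply by remarking that the computation of $\leaf{T}$ from $(T,b)$ given in the preceding proof is uniform, which is precisely your forward direction, and the reverse direction is trivial as you note. No gap here; this is indeed essentially a remark.
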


Finding leaf sets for trees without bounding functions is nontrivial.  The formulation
of $\lpo$ parallelized in the next proposition is the one preceding Theorem 6.7
of Brattka, Gherardi, and Pauly \cite{bgp}, and the Boolean negation of Definition 2.6 of
Brattka and Gherardi \cite{bg}.

\begin{prop}\label{P2}
$(\rca)$  The following are equivalent.
\begin{enumerate}
\item $\aca$.\label{P2a}
\item For every tree $T$, the set $\leaf{T}$ exists.\label{P2b}
\item $\lpoh$:
If $\langle p_i \rangle _{i \in \nat}$ is a sequence
of sequences of natural numbers, then there is a function
$z : \nat \to \{0,1\}$ such that for each $i$, $z(i)=1$ if and only if
$\exists n (p_i (n) = 0 )$.\label{P2c}
\end{enumerate}
\end{prop}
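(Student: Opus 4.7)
The plan is to establish the cycle (\ref{P2a}) $\Rightarrow$ (\ref{P2b}) $\Rightarrow$ (\ref{P2c}) $\Rightarrow$ (\ref{P2a}), all within $\rca$.

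For (\ref{P2a}) $\Rightarrow$ (\ref{P2b}), I would observe that $\sigma \in \leaf{T}$ is expressed by the $\Pi^0_1$ formula $\sigma \in T \wedge \forall j\,(\sigma\cat j \notin T)$ with $T$ as parameter, so $\leaf{T}$ exists by arithmetical comprehension. For (\ref{P2c}) $\Rightarrow$ (\ref{P2a}), I would reduce $\Sigma^0_1$-comprehension, which is equivalent to $\aca$ over $\rca$, to $\lpoh$: given $\phi(n) \equiv \exists m\,\theta(n,m)$ with $\theta$ bounded, set $p_n(m) = 0$ when $\theta(n,m)$ holds and $p_n(m) = 1$ otherwise, apply $\lpoh$ to get a function $z$ with $z(n) = 1$ iff $\phi(n)$, and then form $\{n : \phi(n)\} = \{n : z(n) = 1\}$ by recursive comprehension from the parameter $z$.

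The main obstacle is (\ref{P2b}) $\Rightarrow$ (\ref{P2c}), where I need to arrange a tree so that the required function $z$ is $\Delta^0_1$-definable from $\leaf{T}$, not merely $\Sigma^0_1$ in it. My plan is to build
\[
T = \{\emptyset\} \cup \{\langle n\rangle : n \in \nat\} \cup \{\langle n,m\rangle : p_n(m) = 0\},
\]
which is $\Delta^0_1$-definable from $\langle p_i\rangle_{i\in \nat}$ and clearly closed under initial segments, hence available in $\rca$. Assuming $\leaf{T}$ exists, the node $\langle n\rangle$ is always in $T$, and $\langle n\rangle$ lies in $\leaf{T}$ exactly when no extension $\langle n,m\rangle$ is in $T$, i.e.\ exactly when $\forall m\,(p_n(m) \neq 0)$. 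I would then define $z(n) = 0$ if $\langle n\rangle \in \leaf{T}$ and $z(n) = 1$ otherwise, a function obtainable by recursive comprehension using $\leaf{T}$ as a parameter.

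The conceptual point I want to flag is that the answer $z(n)$ must sit at the predictable address $\langle n\rangle$. A more obvious tree, in which $\langle n\rangle$ spawns an infinite chain of zeros that is truncated once $p_n$ first hits $0$, would deposit the positive answer in $\leaf{T}$ only behind an existential quantifier over the cutoff length, giving merely a $\Sigma^0_1$ reading of $z$ from $\leaf{T}$. Inverting the construction so that $\langle n\rangle$ itself is the leaf precisely when the witness for $p_n$ fails to appear is what makes the extraction of $z$ fall inside recursive comprehension.
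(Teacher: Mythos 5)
Your proposal is correct and follows essentially the same route as the paper: the same cycle of implications, with the key step (\ref{P2b}) $\Rightarrow$ (\ref{P2c}) handled by a tree in which a designated node for each $p_n$ is a leaf exactly when $p_n$ never takes the value $0$, so that $z$ is recursive in $\leaf{T}$. Your tree (nodes $\langle n \rangle$ with children $\langle n,m\rangle$ whenever $p_n(m)=0$) is a mild simplification of the paper's construction (which hangs each $p_n$ off a spine of ones), and your derivation of $\aca$ from $\lpoh$ via $\Sigma^0_1$ comprehension is interchangeable with the paper's use of ranges of injections.
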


\begin{proof}
We will work in $\rca$ throughout the proof.  To see that (\ref{P2a}) implies
(\ref{P2b}), note that $\sigma \in \leaf{T}$ if and only if
$\sigma \in T$ and $\forall j (\sigma \cat j \notin T)$.  Thus arithmetical comprehension
proves the existence of $\leaf {T}$.

To see that (\ref{P2b}) implies (\ref{P2c}), assume (\ref{P2b})
and let $\langle p_i \rangle_{i \in \nat}$ be an instance of $\lpoh$.
Consider the tree $T$ constructed from $\langle p_i \rangle_{i \in \nat}$
as follows.  Every finite sequence of ones is in $T$.  For each $n \in \nat$
the sequence $\sigma_n$ consisting of $n+1$ ones followed by a zero is in $T$.
The sequence $\sigma_n\cat j$ is in $T$ if and only if $p_n (j)=0$ and
$\forall i< j (p_n(i)\neq 0)$.  The set of sequences $T$ exists
by recursive comprehension and is a tree because it is closed
under initial segments.  Apply (\ref{P2b}) to find $\leaf{T}$.  
The function $z: \nat \to \{0,1\}$ defined by
$z(n) = 1$ if and only if $\sigma_n \notin \leaf{T}$ exists by recursive comprehension
and is a solution of the instance of $\lpoh$.

To complete the proof, by Lemma III.1.3 of Simpson \cite{simpson}, it suffices to use
(\ref{P2c}) to find the range of an injection.  Let $f: \nat \to \nat$ be an injection.  By
recursive comprehension, we can find the sequence $\langle p_i \rangle_{i \in \nat}$ defined
by $p_i(n)=0$ if $f(n)=i$, and $p_i(n)=1$ otherwise.
Apply (\ref{P2c}) to find $z$ such that $z(i)=1$ if and only if $\exists n (p_i (n)=0)$.
Then the set $\{ i \mid z(i) = 1\}$ is the range of $f$, is computable from $z$, and
so exists by recursive comprehension.
\end{proof}

By virtue of our circuitous reverse mathematics treatment, we can easily
prove the related Weihrauch reducibility result.  Let $\wleaf$ denote
the problem that accepts a tree $T$ as input and outputs the leaf set $\leaf{T}$.

\begin{prop}\label{P3}
$\wleaf \eqws \lpoh$.
\end{prop}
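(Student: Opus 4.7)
The plan is to promote each direction of the reverse--math argument in Proposition~\ref{P2} to a strong Weihrauch reduction by inspecting the proofs and checking that the backward translations use only the oracle's answer, never the original input.

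For $\wleaf \lews \lpoh$, I would fix a computable enumeration $\sigma_0,\sigma_1,\dots$ of the finite sequences of naturals. Given an input tree $T$, the forward functional outputs the sequence $\langle p_i\rangle_{i\in\nat}$ defined by $p_i(n)=0$ exactly when $\sigma_i\notin T$ or $\sigma_i\cat n\in T$, and $p_i(n)=1$ otherwise. This is computable from $T$. The key observation is that $\exists n\,(p_i(n)=0)$ holds precisely when $\sigma_i$ is \emph{not} a leaf of $T$ (it either fails to be in $T$ at all, or has a proper extension in $T$). Hence, given the $\lpoh$ answer $z$, the set $\{\sigma_i \mid z(i)=0\}$ is exactly $\leaf{T}$, and computing it from $z$ alone requires only the fixed enumeration of sequences. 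This gives a strong Weihrauch reduction.

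For $\lpoh \lews \wleaf$, I would reuse the tree $T$ built in the proof of Proposition~\ref{P2}: all finite strings of ones are in $T$, each $\sigma_n$ consisting of $n{+}1$ ones followed by a zero is in $T$, and $\sigma_n\cat j\in T$ iff $p_n(j)=0$ and $p_n(i)\ne 0$ for all $i<j$. This construction is computable from $\langle p_i\rangle$, so it serves as the forward functional. The backward functional receives $\leaf{T}$ and outputs the characteristic function $z$ of the complement of $\{n : \sigma_n\in\leaf{T}\}$; that is, $z(n)=1$ iff $\sigma_n\notin\leaf{T}$, which is equivalent to $\exists j\,(p_n(j)=0)$. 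Again, the backward map uses only $\leaf{T}$ and the fixed sequences $\sigma_n$, so the reduction is strong.

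The two reductions combine to give $\wleaf\eqws\lpoh$. I do not anticipate a substantive obstacle; the entire content is just verifying that the natural translations in Proposition~\ref{P2} are in fact \emph{strong} Weihrauch reductions rather than ordinary ones. The only subtlety to watch for is the first direction: one must take care that the instance $\langle p_i\rangle$ encodes enough of $T$ so that the leaf set can be recovered from $z$ alone without re-consulting $T$, and that is exactly what the disjunction ``$\sigma_i\notin T$ or $\sigma_i\cat n\in T$'' in the definition of $p_i$ arranges.
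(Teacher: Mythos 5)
Your proposal is correct and follows essentially the same route as the paper: the $\lpoh \lews \wleaf$ direction reuses verbatim the tree built in the proof of Proposition~\ref{P2}, and the $\wleaf \lews \lpoh$ direction encodes extendibility of nodes as an instance of $\lpoh$. Your one refinement---enumerating \emph{all} finite sequences and adding the disjunct ``$\sigma_i \notin T$'' to the definition of $p_i$, where the paper enumerates only the sequences in $T$ and sets $p_i(j)=0$ iff $\sigma_i \cat j \in T$---is a sensible touch that makes it explicit that $\leaf{T}$ is recoverable from $z$ alone, which is exactly what the strong reduction requires.
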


\begin{proof}
The proof that (\ref{P2b}) implies (\ref{P2c}) for Proposition~\ref{P2} also shows that
$\lpoh \lews \wleaf$.  To prove the reverse relation, fix $T$ and let $\langle \sigma_i \rangle_{i \in \nat}$
be an enumeration of the sequences in $T$.  For each $i$, let $p_i(j) =0$ if $\sigma_i \cat j \in T$ and
let $p_i (j) = 1$ otherwise.  If $z$ is a solution to this instance of $\lpoh$, then it is also a characteristic
function for $\leaf{T}$.
\end{proof}

\subsubsection*{Transforming trees}

As shown in the previous section,  finding the leaf set for an arbitrary tree is a nontrivial process
in both the reverse mathematics and Weihrauch settings.  However, in many cases it is possible
to uniformly transform trees into trees with leaf sets while preserving many Weihrauch
equivalences and equivalence theorems of reverse mathematics.
The transformation can be defined using the following operations on finite sequences.  For
every $\sigma \in \nat^{<\nat}$, let $\sigma + 1$ denote the sequence with exactly the same length as $\sigma$ such
that for all $n < \lh{\sigma}$, $(\sigma  + 1)(n) = \sigma(n) + 1$.
For example, $\langle 1, 3, 5 \rangle + 1 = \langle 2,4,6\rangle$.
Similarly, define $\sigma \mdot 1$ so that $(\sigma \mdot 1 ) (n) = \sigma(n) \mdot 1$.
Our main tree transformation is $T^*$ as defined in the following theorem.
Na\"\i{}vely, $T^*$ is created by adding $1$ to every node of $T$ and attaching a leaf
labeled $0$ to each positive node.

\begin{lemma}\label{B1}
$(\rca)$
Suppose $T \subset \nat^{<\nat}$ is a tree.  The following are also trees:
$T^- =\{ \sigma \mdot 1 \mid \sigma \in T\}$,
$T^+ =\{ \sigma +1 \mid \sigma \in T\}$, and
$T^* =T^+ \cup \{ \sigma \cat 0  \mid \sigma \in T^+\}$.
Furthermore, given a sequence $\langle T_i \rangle_{i \in \nat}$, we can find
the sequence $\langle T_i^* , \leaf{T_i^* } \rangle$.
\end{lemma}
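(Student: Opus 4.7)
The plan is to verify tree-closure by direct inspection and then identify the leaves of $T^*$ explicitly. Throughout we work in $\rca$.

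First, I would record the obvious fact that both operations $\sigma \mapsto \sigma + 1$ and $\sigma \mapsto \sigma \mdot 1$ are length-preserving and commute with taking initial segments: if $\tau$ is an initial segment of $\sigma$ of length $k$, then $\tau + 1$ is the initial segment of $\sigma + 1$ of length $k$, and similarly for $\mdot 1$. Since $T$ is closed under initial segments, so are $T^+$ and $T^-$, and each is a set existing by recursive comprehension.

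Next, I would show $T^*$ is a tree by splitting into cases on which piece of the union a given sequence lies in. If $\tau$ is an initial segment of some $\rho \in T^+$, then $\tau \in T^+ \subseteq T^*$ by the previous paragraph. If $\tau$ is a proper initial segment of $\sigma \cat 0$ with $\sigma \in T^+$, then either $\tau = \sigma \cat 0 = \rho$ itself or $\tau$ is an initial segment of $\sigma$, hence in $T^+ \subseteq T^*$. Closure under initial segments follows, and $T^*$ exists by recursive comprehension.

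The main step, and really the only content of the lemma, is identifying $\leaf{T^*}$. I claim $\leaf{T^*} = \{ \sigma \cat 0 \mid \sigma \in T^+ \}$. On one hand, no element of $T^+$ is a leaf of $T^*$, because every $\sigma \in T^+$ has the proper extension $\sigma \cat 0 \in T^*$. On the other hand, a sequence of the form $\sigma \cat 0$ with $\sigma \in T^+$ has no extensions in $T^*$: every entry of a member of $T^+$ is positive (since it arose by adding $1$), so $\sigma \cat 0 \cat j \notin T^+$ for any $j$, and $\sigma \cat 0 \cat j$ is not of the form $\tau \cat 0$ with $\tau \in T^+$ either, because the required $\tau = \sigma \cat 0$ contains a zero. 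With this characterization, $\leaf{T^*}$ is $\Delta^0_0$-definable from $T^+$, and so exists by recursive comprehension.

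Finally, the uniformity claim for sequences $\langle T_i \rangle_{i \in \nat}$ follows because each construction above—forming $T_i^+$ from $T_i$, forming $T_i^*$ from $T_i^+$, and forming $\leaf{T_i^*}$ from $T_i^+$—is given by a uniform $\Delta^0_0$ recipe in the parameter $T_i$, so the entire sequence $\langle T_i^*, \leaf{T_i^*} \rangle$ exists by a single application of recursive comprehension. I do not expect any real obstacles here; the only place where one must be a little careful is ruling out that a sequence ending in $0$ could be the initial segment of another element of $\{ \sigma \cat 0 \mid \sigma \in T^+\}$, which is handled by the positivity of members of $T^+$.
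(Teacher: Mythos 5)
Your proof is correct and follows essentially the same route as the paper: closure under initial segments is checked by cases on the two pieces of the union, and the leaves of $T^*$ are identified as exactly the elements ending in $0$ (equivalently, the set $\{\sigma\cat 0 \mid \sigma \in T^+\}$), which makes $\leaf{T^*}$ computable from $T$ uniformly. The paper is slightly terser — it phrases the uniformity via the inverse shift $\sigma \mdot 1$ — but the content is the same.
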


\begin{proof}
Working in $\rca$, it is easy to use recursive comprehension to prove the existence of the sets
$T^-$, $T^+$, and $T^*$.  The initial segments of the shifts $\sigma \mdot 1$ and $\sigma + 1$ are
the shifts of initial segments of $\sigma$, so $T^-$ and $T^+$ are trees.  A proper initial segment
of a sequence in the set $\{ \sigma \cat 0  \mid \sigma \in T^+\}$ is an initial segment of an element
of $T^+$, so $T^*$ is also a tree.

To complete the proof, suppose $\langle T_i \rangle_{i \in \nat}$ is a sequence of trees.  For each
$i$, $\sigma \in T_i^*$ if and only if the last element of $\sigma$ is positive and $\sigma \mdot 1 \in T_i$,
or if $\sigma = \tau \cat 0$ and $\tau \mdot 1 \in T_i$.
Thus recursive comprehension implies that $\langle T_i^* \rangle_{i \in \nat}$ exists.  For each $i$,
the sequence $\sigma \in \leaf {T_i^* }$ if and only if $\sigma \in T_i^*$ and the last entry of $\sigma$ is $0$.
Thus $\rca$ can prove that the sequence of pairs $\langle T_i^* , \leaf{T_i^* } \rangle$ exists.
\end{proof}

The trees $T$ and $T^*$ share many properties.  Information about paths and subtrees of one can be
uniformly transformed to information about the other.  As described in Simpson \cite{simpson}*{Definition I.6.6},
a subtree $S$ if $T$ is perfect if every sequence in $T$ has incompatible extensions in $T$.  The perfect
kernel of $T$ is the union of all the perfect subtrees of $T$.

\begin{thm}\label{B2}
$( \rca )$  A tree $T$ and the transform $T^*$ satisfy the following.
\begin{enumerate}
\item $T$ is well-founded if and only if $T^*$ is well-founded.\label{B2a}
\item  $T$ has at most one path if and only if $T^*$ has at most one path.\label{B2b}
\item  $S$ is a perfect subtree of $T$ if and only if $S^+$ is a perfect subtree of $T^*$.\label{B2c}
\item  $K$ is the perfect kernel of $T$ if and only if $K^+$ is the perfect kernel of $T^*$.\label{B2d}
\end{enumerate}
\end{thm}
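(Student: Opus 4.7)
The engine driving all four parts is the fact that $\sigma \mapsto \sigma + 1$ is a length-, initial-segment-, and compatibility-preserving bijection from $\nat^{<\nat}$ onto the sequences with only positive entries; this gives a tree isomorphism between $T$ and $T^+$. The only additional nodes in $T^* = T^+ \cup \{\sigma \cat 0 \mid \sigma \in T^+\}$ are the zero-extensions $\sigma \cat 0$, each of which is a leaf of $T^*$ by Lemma~\ref{B1}. I would verify these observations once in $\rca$ from the definitions and invoke them throughout.

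For (\ref{B2a}) and (\ref{B2b}), any infinite path through $T^*$ must avoid leaves, so it consists entirely of sequences in $T^+$; subtracting $1$ componentwise then defines a bijection between the infinite paths through $T^*$ and those through $T$. Both statements follow at once: $T^*$ is well-founded iff $T$ is, and $T^*$ has at most one path iff $T$ does.

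For (\ref{B2c}), the forward direction is immediate: if $S \subseteq T$ is perfect, then $S^+ \subseteq T^+ \subseteq T^*$ is a tree by Lemma~\ref{B1}, and incompatible extensions of $\sigma$ in $S$ shift to incompatible extensions of $\sigma + 1$ in $S^+$. For the converse, elements of $S^+$ have only positive entries, forcing $S^+ \subseteq T^+$ and hence $S \subseteq T$; incompatible extensions in $S^+$ then pull back to $S$ via the inverse shift. For (\ref{B2d}), part (\ref{B2c}) identifies the perfect subtrees of $T^*$ with $\{ S^+ : S \text{ a perfect subtree of } T \}$. Because the shift commutes with unions and is injective, $K$ is the union of all perfect subtrees of $T$ iff $K^+$ is the union of all perfect subtrees of $T^*$; the existence of $K^+$ from $K$ (and conversely) is by recursive comprehension.

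The one subtlety I expect is in (\ref{B2c}): one must note that a perfect subtree of $T^*$ contains no zero-extension leaf, since leaves admit no incompatible extensions within any subtree. This is what lets the inverse shift be applied in the converse direction and is invisible in the forward direction. Once this fact is secured, the remaining work for (\ref{B2c}) and (\ref{B2d}) is routine bookkeeping within $\rca$ using Lemma~\ref{B1}.
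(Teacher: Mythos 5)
Your proposal is correct and takes essentially the same approach as the paper, which compresses the whole argument into the single observation that $\sigma \mapsto \sigma + 1$ is a bijection between the paths of $T$ and those of $T^*$ and between the perfect subtrees of $T$ and those of $T^*$. Your write-up simply makes explicit the supporting facts (the added nodes $\sigma \cat 0$ are leaves of $T^*$, and leaves cannot occur in paths or perfect subtrees) that the paper leaves implicit.
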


\begin{proof}
Each part follows from the fact that the map taking $\sigma$ to $\sigma+1$ is a bijection between
the paths of $T$ and those of $T^*$ and also between the perfect subtrees of $T$ and those of $T^*$.
\end{proof}

The next two theorems list familiar equivalences for tree statements that continue to hold when restricted
to trees with leaf sets.  For both proofs, the central tool is the transformation from $T$ to $T^*$.  In the following
theorem, the labels
used for the combinatorial principles are consistent with those used for the associated Weihrauch problems
by Kihara, Marcone, and Pauly \cite{kmp}.

\begin{thm}\label{B3}
$( \rca )$  The following are equivalent.
\begin{enumerate}
\item  $\atr$.\label{B3a}
\item  {\rm The $\Sigma^1_1$ separation principle:}  For any $\Sigma^1_1$ formulas $\varphi _0 (n)$
and $\varphi _ 1 (n)$ containing no free occurrences of $Z$, if 
$\neg \exists n ( \varphi_0 (n) \land \varphi_1 (n) )$, then
\[\exists Z \forall n ((\varphi_0 (n) \to n \in Z ) \land (\varphi_1 (n) \to n \notin Z)).\]\label{B3b}
\item  $\ssep :$  If $\langle T_{0,i} \rangle_{i \in \nat}$ and $\langle T_{1,i} \rangle_{i \in \nat}$
are sequences of trees such that for each $i$, at most one of $T_{0,i}$ and $T_{1,i}$ has an
infinite path, then there is a set $Z$ such that for all $n$, $T_{0,n}$ has an infinite path implies $n \in Z$
and $T_{1,n}$ has an infinite path implies $n \notin Z$.\label{B3c}
\item $\ssep_L :$  Item (\ref{B3c}) for 
$\langle T_{0,i} , \leaf {T_{0,i}}  \rangle_{i \in \nat}$ and $\langle T_{1,i} , \leaf {T_{1,i}}  \rangle_{i \in \nat}$,
sequences of trees with leaf sets.
\label{B3d}
\item \label{B3e}
$\sca :$  If $\langle T_i \rangle_{i \in \nat}$ is a sequence of trees each with at most one infinite path,
then there is a set $Z$ such that for all $n$, $n \in Z$ if and only if $T_n$ has an infinite path.
\item \label{B3f}
$\sca_L :$  Item \ref{B3e} for sequences of trees with leaf sets.
\item  \label{B3g}
$\ptt :$  If $T$ has uncountably many paths then $T$ has a non-empty perfect subtree.
\item  \label{B3h}
$\ptt _L :$  Item \ref{B3g} for trees with leaf sets.
\end{enumerate}
\end{thm}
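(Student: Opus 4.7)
The classical equivalences among items \ref{B3a}, \ref{B3b}, \ref{B3c}, \ref{B3e}, and \ref{B3g} are standard in reverse mathematics, so I would simply cite Simpson \cite{simpson}*{Chapter V}. Each unrestricted principle trivially implies its L-restriction, since any instance equipped with a leaf set is in particular an instance without, so the substantive work is to close the cycle by deriving \ref{B3c}, \ref{B3e}, and \ref{B3g} from \ref{B3d}, \ref{B3f}, and \ref{B3h} respectively.

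For \ref{B3d} $\to$ \ref{B3c} and \ref{B3f} $\to$ \ref{B3e}, I would uniformly apply the transformation $T \mapsto T^*$. Given sequences $\langle T_{0,i}\rangle$ and $\langle T_{1,i}\rangle$ as in \ref{B3c}, Lemma~\ref{B1} produces $\langle T_{j,i}^*, \leaf{T_{j,i}^*}\rangle$ in $\rca$. By Theorem~\ref{B2}(\ref{B2a}), $T_{j,i}^*$ is ill-founded precisely when $T_{j,i}$ is, so the hypothesis that at most one tree in each pair has an infinite path transfers to the starred sequences; a set $Z$ returned by $\ssep_L$ then separates the original sequences as well. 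The derivation of \ref{B3e} from \ref{B3f} is identical, using uniqueness of infinite paths in place of disjointness.

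For \ref{B3h} $\to$ \ref{B3g}, the perfect-subtree preservation of Theorem~\ref{B2}(\ref{B2c}) comes into play. Given a tree $T$ with uncountably many paths, Lemma~\ref{B1} supplies $T^*$ together with $\leaf{T^*}$; since $\sigma \mapsto \sigma+1$ is a bijection between the paths of $T$ and of $T^*$ (as used in the proof of Theorem~\ref{B2}), the tree $T^*$ also has uncountably many paths. Apply $\ptt _L$ to obtain a non-empty perfect subtree $S'$ of $T^*$. A leaf of $T^*$ ends in $0$ and has no extensions in $T^*$, so it has no incompatible extensions in any subtree and cannot lie in $S'$; hence every node of $S'$ has the form $\tau+1$, and $S := \{\tau \mid \tau+1 \in S'\}$ exists by recursive comprehension. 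Theorem~\ref{B2}(\ref{B2c}) then gives that $S$ is a non-empty perfect subtree of $T$.

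The main obstacle is the last step's observation that no leaf of $T^*$ can lie in a perfect subtree of $T^*$, which is what licenses the recovery of $S$ from $S'$ within $\rca$. Everything else is routine bookkeeping around Lemma~\ref{B1} and Theorem~\ref{B2}.
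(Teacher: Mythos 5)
Your proposal follows the same route as the paper: each unrestricted principle trivially implies its $L$-restriction, and the converses are obtained by passing from $T$ to $T^*$ via Lemma~\ref{B1} and Theorem~\ref{B2}. Your extra observation for \ptt---that a perfect subtree of $T^*$ can contain no leaf of $T^*$, so it has the form $S^+$ and $S$ can be recovered by recursive comprehension---is correct and is exactly what makes the appeal to Theorem~\ref{B2}(\ref{B2c}) legitimate; the paper leaves this implicit.

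The one place where ``simply cite Simpson'' does not suffice is item~(\ref{B3c}). Simpson's Theorem V.5.1 gives the equivalence of $\atr$ with the formula version~(\ref{B3b}), and (\ref{B3b})~$\to$~(\ref{B3c}) is immediate because path existence is $\Sigma^1_1$; but the reversal (\ref{B3c})~$\to$~(\ref{B3b}) is not verbatim in Simpson. Translating an arbitrary $\Sigma^1_1$ formula into a tree whose ill-foundedness expresses it uses the normal form theorem, which is proved in $\aca$, not $\rca$. The paper therefore bootstraps: it first derives $\aca$ from~(\ref{B3c}) by applying it to a sequence of pairs of linear trees that compute the range of an injection, and only then applies the normal form (via Lemma 3.14 of Friedman and Hirst) to get~(\ref{B3b}). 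This is a citation-level repair rather than a conceptual one, and the rest of your argument is sound.
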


\begin{proof}
The equivalence of (\ref{B3a}) and (\ref{B3b}) is Theorem V.5.1 of Simpson \cite{simpson}.
The existence of an infinite path in a tree can be written as a $\Sigma^1_1$ formula,
so (\ref{B3b}) implies (\ref{B3c}).  To prove the converse, use a bootstrapping argument,
proving $\aca$ from (\ref{B3c}) by creating a sequence of pairs of linear trees that compute
the range of an injection.  Then use $\aca$ and (\ref{B3c}) to derive (\ref{B3b}) by an
application of Lemma 3.14 of Friedman and Hirst \cite{fh}.  The equivalence of (\ref{B3e}) and (\ref{B3a})
is Theorem V.5.2 of Simpson \cite{simpson}, and the equivalence of (\ref{B3g}) and (\ref{B3a}) is
Theorem V.5.5 of Simpson \cite{simpson}.  Item (\ref{B3d}) is a restriction of (\ref{B3c}), so (\ref{B3c})
implies (\ref{B3d}) trivially.  The converse is an immediate consequence of Theorem \ref{B2}.
Similarly, (\ref{B3e}) and (\ref{B3f}) are equivalent, as are (\ref{B3g}) and (\ref{B3h}).
\end{proof}

To avoid confusion with the subsystem $\poo$,
in following theorem we use $\wfh$ as a label
for the combinatorial principle denoted by $\Pi^1_1{\rm -}{{\sf {CA}}}$ 
in the article of Kihara, Marcone, and Pauly \cite{kmp}.  Note that $\wfh$ is the
infinite parallelization of the the principle $\wf$, that takes a tree as an input and
outputs a $1$ if the tree is well-founded and a $0$ otherwise.

\begin{thm}\label{B4}
$(\rca )$  The following are equivalent:
\begin{enumerate}
\item\label{B4a}  $\poo :$  If $\varphi(n)$ is a $\Pi^1_1$ formula, then there is a set $Z$
such that for all $n$, $n \in Z$ if and only if $\varphi (n)$.
\item\label{B4b}  $\wfh :$  If $\langle T_i \rangle_{i \in \nat}$ is a sequence of trees, then there is a set $Z$ such that
for all $n$, $n \in Z$ if and only if $T$ has no infinite path.
\item\label{B4c}  $\wfh _L :$  Item (\ref{B4b}) for sequences of trees with leaf sets.
\item\label{B4d}  $\pk :$ Every tree has a perfect kernel.
\item\label{B4e}  $\pk _L :$  Item (\ref{B4d}) for trees with leaf sets.
\end{enumerate}
\end{thm}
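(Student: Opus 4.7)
The plan is to reduce the five-way equivalence to the two classical equivalences $\poo \leftrightarrow \wfh$ and $\poo \leftrightarrow \pk$ found in Simpson's text \cite{simpson}, and then apply Lemma~\ref{B1} and Theorem~\ref{B2} to handle the restriction to trees with leaf sets. I expect the whole argument to follow the same template as the preceding Theorem~\ref{B3}: standard reverse mathematics gives the unrestricted versions, and the $T \mapsto T^*$ construction absorbs the passage to the leaf-set restrictions.

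First I would invoke the two classical Simpson results to establish (\ref{B4a}) $\leftrightarrow$ (\ref{B4b}) and (\ref{B4a}) $\leftrightarrow$ (\ref{B4d}). The implications (\ref{B4b}) $\to$ (\ref{B4c}) and (\ref{B4d}) $\to$ (\ref{B4e}) are then immediate, since the latter items are simply the former specialized to inputs that already come equipped with a leaf set.

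The heart of the argument is the reverse direction of each of these two restrictions, handled via the transformation $T \mapsto T^*$. For (\ref{B4c}) $\to$ (\ref{B4b}), given an arbitrary sequence $\langle T_i \rangle_{i \in \nat}$, Lemma~\ref{B1} produces the sequence $\langle T_i^*, \leaf{T_i^*} \rangle_{i \in \nat}$ inside $\rca$; I will feed this into $\wfh_L$ and use Theorem~\ref{B2}(\ref{B2a}) to observe that the resulting set $Z$ of indices of well-founded $T_i^*$ is also the set of indices of well-founded $T_i$. For (\ref{B4e}) $\to$ (\ref{B4d}), given a single tree $T$, Lemma~\ref{B1} supplies $T^*$ together with $\leaf{T^*}$; applying $\pk_L$ gives the perfect kernel $L$ of $T^*$, and Theorem~\ref{B2}(\ref{B2d}) identifies $L$ as $K^+$ for the perfect kernel $K$ of $T$. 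Recursive comprehension then recovers $K = \{\sigma \mdot 1 \mid \sigma \in L\}$, completing the reduction.

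The step that I expect to require the most care is simply locating the exact classical equivalences for $\poo$ in Simpson and verifying they have the precise formulations used in items (\ref{B4b}) and (\ref{B4d}); once those citations are in place, all remaining work reduces to the properties of the $T \mapsto T^*$ transformation already established in the previous section, and the proof becomes essentially mechanical.
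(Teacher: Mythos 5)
Your proposal is correct and follows the paper's own proof essentially verbatim: cite Simpson's Theorems VI.1.1 and VI.1.3 for the equivalences of $\poo$ with $\wfh$ and $\pk$, note that the leaf-set versions are trivial restrictions, and recover the converses from the $T \mapsto T^*$ transformation via Lemma~\ref{B1} and Theorem~\ref{B2}. The extra detail you supply (recovering $K$ from $K^+$ by the $\mdot\,1$ shift) is accurate and consistent with the paper's machinery.
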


\begin{proof}
The equivalence of (\ref{B4a}) and (\ref{B4b}) is Theorem VI.1.1 of Simpson \cite{simpson}.
The equivalence of (\ref{B4a}) and (\ref{B4d}) is Theorem VI.1.3 of Simpson \cite{simpson}.
The restriction (\ref{B4c}) follows trivially from (\ref{B4b}), and the converse follows immediately
from Theorem \ref{B2}.  By a similar argument, (\ref{B4d}) and (\ref{B4e}) are equivalent.
\end{proof}

We now turn to the Weihrauch analogs of the preceding results.  The main tool is the computability
theoretic version of Lemma \ref{B1}.

\begin{lemma}\label{B5}
There is a uniformly computable map from trees $T$ to $T^-$, and invertible uniformly
computable maps from $T$ to $T^+$ and $T^*$.
Also, there is a computable functional mapping sequences
of trees $\langle T_i \rangle_{i \in \nat}$ to
$\langle T_i^* , \leaf{T_i^* } \rangle$.
\end{lemma}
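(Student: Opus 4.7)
The plan is to read the proof of Lemma~\ref{B1} as a description of computable procedures, exploiting the fact that each operation on finite sequences appearing there---namely $\sigma \mapsto \sigma + 1$, $\sigma \mapsto \sigma \mdot 1$, and appending a $0$---is computable on its own. A tree is presented as (the characteristic function of) a subset of $\nat^{<\nat}$, so a Turing machine with oracle access to $T$ can decide membership in each of the target trees via the explicit conditions already recorded in that proof.

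First I would specify the three forward maps by giving decision procedures. A sequence $\sigma$ lies in $T^-$ iff some $\tau$ among the finitely many preimages of $\sigma$ under $\mdot 1$ belongs to $T$; $\sigma$ lies in $T^+$ iff every entry of $\sigma$ is positive and $\sigma \mdot 1 \in T$; and $\sigma$ lies in $T^*$ iff $\sigma \in T^+$ or $\sigma = \tau \cat 0$ for some $\tau \in T^+$. Each condition is decidable relative to $T$, uniformly in $\sigma$, so the three maps are uniformly computable.

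Next I would exhibit computable inverses for the maps $T \mapsto T^+$ and $T \mapsto T^*$. Since adding $1$ to every entry is a bijection from $\nat^{<\nat}$ onto $(\nat \setminus \{0\})^{<\nat}$, the equivalence $\sigma \in T \iff \sigma + 1 \in T^+$ recovers $T$ from $T^+$ using one oracle query per sequence. The same formula recovers $T$ from $T^*$, because $\sigma + 1$ has only positive entries and therefore lies in $T^*$ iff it lies in $T^+ \subseteq T^*$. The map to $T^-$ is not generally invertible, since $\mdot 1$ conflates $0$ and $1$; I would note this briefly to explain why no invertibility claim is made for it.

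For the final clause I would apply the forward $T^*$ construction uniformly in $i$ to obtain $\langle T_i^* \rangle_{i \in \nat}$, then compute $\leaf{T_i^*}$ using the characterization extracted from the proof of Lemma~\ref{B1}: a sequence in $T_i^*$ is a leaf exactly when it is nonempty and ends in $0$. This is decidable with oracle access to $T_i$, so $\langle T_i^*, \leaf{T_i^*} \rangle_{i \in \nat}$ is computable uniformly in $\langle T_i \rangle_{i \in \nat}$. No step presents a real obstacle; the only point worth double-checking is that the leaf characterization really is just ``ends in $0$,'' which follows immediately from the fact that the only sequences in $T^*$ extending a node $\tau \in T^+$ are $\tau \cat 0$ together with elements of $T^+$ strictly extending $\tau$.
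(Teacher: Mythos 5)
Your proposal is correct and follows the same route as the paper, which simply observes that the membership criteria from Lemma~\ref{B1} (including the ``last entry is $0$'' characterization of $\leaf{T^*}$) are uniformly decidable relative to the input tree and that $\sigma \in T \iff \sigma+1 \in T^+ \iff \sigma+1 \in T^*$ gives the inverses. You have merely written out the oracle computations that the paper leaves implicit.
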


\begin{proof}
The processes described at the beginning of the section are uniformly computable, and
for $T^+$ and $T^*$, uniformly computably invertible.  Leaf sets are uniformly computable
for trees of the form $T^*$.
\end{proof}

The following Weihrauch analog of Theorem \ref{B3} is based on the results of Kihara, Marcone, and Pauly \cite{kmp}.

\begin{thm}\label{B6}
$\ptt \eqws \ptt_L \ltw \ssep$.  Also, the following principles are strongly Weihrauch equivalent:
$\ssep$, $\ssep_L$, $\sca$, and $\sca_L$.
\end{thm}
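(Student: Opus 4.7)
The plan is to transfer Weihrauch reductions between each unrestricted principle and its leaf-set restriction using Lemma~\ref{B5} and Theorem~\ref{B2}, while invoking the results of Kihara, Marcone, and Pauly~\cite{kmp} to handle the relationships among the unrestricted principles themselves. For $\ptt \eqws \ptt_L$, the reduction $\ptt_L \lews \ptt$ is trivial (forget the leaf set). For the reverse, given a tree $T$ with uncountably many paths, I would apply Lemma~\ref{B5} to compute $\langle T^*, \leaf{T^*}\rangle$; the path bijection $\sigma \mapsto \sigma + 1$ implicit in Theorem~\ref{B2} guarantees that $T^*$ also has uncountably many paths, so this is a legitimate instance of $\ptt_L$. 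Any perfect subtree returned has the form $S^+$ by Theorem~\ref{B2}(\ref{B2c}), and the uniformly computable inverse shift supplied by Lemma~\ref{B5} recovers a perfect subtree $S$ of $T$.

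For the four-way strong equivalence of $\ssep$, $\ssep_L$, $\sca$, and $\sca_L$, the restriction reductions $\ssep_L \lews \ssep$ and $\sca_L \lews \sca$ are trivial. To obtain $\ssep \lews \ssep_L$, given sequences $\langle T_{0,i}\rangle_{i\in\nat}$ and $\langle T_{1,i}\rangle_{i\in\nat}$, I would apply Lemma~\ref{B5} componentwise to obtain sequences of starred trees with leaf sets. Theorem~\ref{B2}(\ref{B2a}) guarantees that $T_{j,i}^*$ is well-founded iff $T_{j,i}$ is, so the hypothesis that at most one of each pair has an infinite path is preserved, and any separator $Z$ for the transformed instance separates the original pairs as well. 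The analogous construction, using Theorem~\ref{B2}(\ref{B2b}), yields $\sca \lews \sca_L$. The residual cross-equivalence $\ssep \eqws \sca$ is cited from~\cite{kmp}.

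Finally, for $\ptt_L \ltw \ssep$, composing $\ptt_L \lew \ptt$ with $\ptt \lew \ssep$ from~\cite{kmp} yields $\ptt_L \lew \ssep$; the strict inequality follows from $\ssep \not\lew \ptt$ in~\cite{kmp} together with the equivalence $\ptt \eqws \ptt_L$ just established. The main obstacle is keeping careful track of strong versus ordinary Weihrauch reductions and correctly locating the precise inputs---chiefly $\ptt \lew \ssep$, $\ssep \not\lew \ptt$, and $\ssep \eqws \sca$---within the Kihara--Marcone--Pauly framework; once these are identified, the transfers via Lemma~\ref{B5} and Theorem~\ref{B2} are routine uniform computations, since the $T \mapsto T^*$ transformation and its inverse are both computable and preserve exactly the structural data (paths, well-foundedness, and perfect subtrees) that the principles query.
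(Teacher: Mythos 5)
Your proposal is correct and follows essentially the same route as the paper: the leaf-set restrictions are handled by the uniformly computable, invertible transformation $T \mapsto \langle T^*, \leaf{T^*}\rangle$ of Lemma~\ref{B5} together with the preservation properties of Theorem~\ref{B2}, and the relations among the unrestricted principles ($\ssep \eqws \sca$ and the strict separation from $\ptt$) are delegated to Kihara, Marcone, and Pauly. Your write-up simply makes explicit the routine details that the paper's proof leaves implicit.
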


\begin{proof}
The equivalences between the statements and the versions restricted to trees with leaf sets follow from
Lemma \ref{B5} and Theorem \ref{B2}.
The equivalence of $\ssep$ and $\sca$ is included in Theorem 3.11 of Kihara, Marcone, and Pauly \cite{kmp},
while $\ptt \ltws \ssep$ follows from their Corollary 3.7, Theorem 3.11, and Proposition 6.4 \cite{kmp}.
\end{proof}

We close the section with the Weihrauch analog of Theorem \ref{B4}.

\begin{thm}\label{B7}
$\wf \eqws \wf_L$.  Also,
the following four principles are strongly Weihrauch equivalent:
$\wfh$, $\wfh_L$, $\pk$, and $\pk_L$. 
\end{thm}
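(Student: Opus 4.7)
The plan mirrors the proof of Theorem \ref{B6}. I would handle the reductions between each $L$-restricted principle and its unrestricted counterpart using Lemma \ref{B5} and Theorem \ref{B2}, and then address the strong Weihrauch equivalence $\wfh \eqws \pk$ separately.

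For $\wf \eqws \wf_L$, the direction $\wf_L \lews \wf$ is the trivial restriction that discards the leaf-set input. For the reverse, given a tree $T$, Lemma \ref{B5} uniformly produces the $\wf_L$-instance $(T^*, \leaf{T^*})$; by Theorem \ref{B2}(\ref{B2a}) the oracle's well-foundedness bit is already correct for $T$, so the post-processing is the identity. Applying the same transformation to each tree of a sequence yields $\wfh \eqws \wfh_L$. For $\pk \eqws \pk_L$, given $T$ and the output $K^*$ of $\pk_L$ on $(T^*, \leaf{T^*})$, Theorem \ref{B2}(\ref{B2d}) gives $K^* = K^+$, and the coordinate-wise map $\sigma \mapsto \sigma \mdot 1$ recovers $K$ from $K^*$ without reference to $T$, providing a strong reduction.

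The equivalence $\wfh \eqws \pk$ should follow from the results of Kihara, Marcone, and Pauly \cite{kmp}, but a direct argument is also available. For $\pk \lews \wfh$, I would form, uniformly in $\sigma \in \nat^{<\nat}$, a tree $Q_\sigma$ whose nodes code finite embeddings of $2^{<\nat}$ into $T$ rooted at $\sigma$ that send incompatible binary pairs to incompatible extensions, stipulating that $Q_\sigma$ is trivial when $\sigma \notin T$. Then $Q_\sigma$ is ill-founded precisely when $T_\sigma$ contains a perfect subtree, i.e., when $\sigma$ lies in the perfect kernel of $T$, so the complement of the $\wfh$-solution on $\langle Q_\sigma \rangle$ is exactly the perfect kernel. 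For $\wfh \lews \pk$, given $\langle T_i \rangle_{i \in \nat}$, I would pad each level of $T_i$ with a binary branching coordinate to form $T_i'$ having a perfect subtree if and only if $T_i$ is ill-founded; then amalgamate the $T_i'$ under a common root by prefixing with $\langle i \rangle$, so that the set $\{i : \langle i \rangle \notin K\}$ extracted from the perfect kernel $K$ of the amalgamation gives the desired set of well-founded indices.

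The main subtlety is that strong Weihrauch forbids the post-processing from inspecting the original input. This forces the uniform indexing choices above---enumerating $Q_\sigma$ over all of $\nat^{<\nat}$ rather than only over nodes of $T$, and using a fixed canonical coding for the amalgamated tree---so that the final extraction step depends only on the oracle's answer.
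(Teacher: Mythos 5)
Your proposal follows the paper's proof essentially step for step: the restricted/unrestricted equivalences via Lemma \ref{B5} and Theorem \ref{B2}, the reduction $\wfh \lews \pk$ by interleaving each $T_i$ with a binary branching coordinate and amalgamating the results below roots $\langle i \rangle$, and $\pk \lews \wfh$ via auxiliary trees whose nodes code finite approximations to a perfect subtree above each $\sigma$. Your added care about strong reductions (indexing $Q_\sigma$ over all of $\nat^{<\nat}$ and extracting $\{i \mid \langle i \rangle \notin K\}$) is a sound tightening of details the paper leaves implicit rather than a different approach.
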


\begin{proof}
The equivalences $\wf \eqws \wf_L$, $\wfh \eqws \wfh_L$, and $\pk \eqws \pk_L$ all follow
immediately from Lemma \ref{B5} and Theorem \ref{B2}.  It suffices to show that
$\wfh \eqws \pk$.

To see that $\wfh \lews \pk$, let $\langle T_i \rangle_{i \in \nat}$ be a sequence of trees,
the input for $\wfh$.  For sequences $\sigma$ and $\tau$ with $\lh{\sigma}=\lh{\tau}$, let
$\sigma * \tau$ denote the sequence consisting of alternating entries of $\sigma$ and $\tau$.
Thus for $\sigma$ and $\tau$ of length $n+1$,
$\sigma*\tau = \langle \sigma(0) , \tau (0) , \dots , \sigma(n) , \tau (n) \rangle$.
Define the tree $T$ by including the following sequences for each $i \in \nat$:
\begin{list}{$\bullet$}{}
\item $\langle i \rangle \in T$ for each $i \in \nat$, and
\item if $\sigma \in T_i$, $\lh {\sigma} = n$, and $\tau$ is a binary sequence of length $n$,
then $\langle i \rangle \cat (\sigma *\tau )\in T$ and the initial segment of $\langle i \rangle \cat( \sigma *\tau)$
omitting the last element is also in $T$.
\end{list}
The tree $T$ is uniformly computable from the sequence $\langle T_i \rangle_{i \in \nat}$.
If $T_i$ has an infinite path $p$, then for every binary sequence $\tau$, all initial segments
of $\langle i \rangle \cat (p * \tau)$ are in $T$.  In this case, there is a perfect subtree of $T$
above $\langle i\rangle$, so $\langle i\rangle$ is in the perfect kernel of $T$.  If $T_i$ is well-founded,
then the subtree of extensions of $\langle i \rangle$ in $T$ is also well-founded, so no
perfect subtree of $T$ contains $\langle i \rangle$.  Thus, if $K$ is a perfect kernel for $T$,
then $T_i$ is well-founded if and only if $\langle i \rangle \in K$.  Summarizing,
$Z = \{i \mid \langle i \rangle \in K \}$ is the desired output for $\wfh$.

To see that $\pk \lews \wfh$, let $T$ be an input tree for $\pk$.  In the following,
we freely conflate finite sequences with their natural number codes.  For each finite
sequence $\sigma \in T$, define $T_\sigma$ as follows:
\begin{list}{$\bullet$}{}
\item $\langle \sigma \rangle \in T_\sigma$, and
\item if $\langle \sigma , \dots, \langle \tau_0 , \dots , \tau_m \rangle \rangle \in T_\sigma$,
and for each $i \le m$, $\tau_i \cat e_{i,0}$ and $\tau_i \cat e_{i,1}$ are incompatible extensions
of $\tau_i$ in $T$, then
\[
\langle \sigma , \dots  , \langle \tau_0 , \dots , \tau_m \rangle,
\langle \tau_0 \cat e_{0,0}, \tau_0\cat e_{0,1}, \dots, \tau_m \cat e_{m,0}, \tau_m \cat e_{m,1} \rangle\rangle \in T_\sigma .
\]
\end{list}
The sequence $\langle T_\sigma \rangle _{\sigma \in T}$ (which can be viewed as $\langle T_i \rangle_{i \in \nat}$)
is uniformly computable from $T$.  For each $\sigma \in T$, $T_\sigma$ has an infinite path if and only if
$\sigma$ is contained in a perfect subtree of $T$.  Let $Z$ be a solution of $\wfh$ for $\langle T_\sigma \rangle _{\sigma \in T}$.
Then $Z = \{ \sigma \in T \mid T_\sigma {\text{~is well-founded}}\}$, and
$K = \{ \sigma \in T \mid \sigma \notin Z \}$ is the perfect kernel of $T$.
\end{proof}

\subsubsection*{An application}

This section presents a Weihrauch analysis closely related to Theorem 6 of Davis, Hirst, Pardo, and Ransom \cite{davisetal}.
A hypergraph $H = (V, E)$ consists of a set of vertices $V= \{ v_0 , v_1 , \dots \}$ and a set of edges
$E=\{e_1 , e_2 , \dots \}$, where each edge in $E$ is a set of vertices.  For hypergraphs, an edge can be a set of any
cardinality.  If every edge of a hypergraph has cardinality exactly $2$, then $H$ is a graph.
A $k$-coloring of a hypergraph $H=(V,E)$ is a function $f: V \to k$.  A $k$-coloring is called proper
if every edge with at least two vertices contains vertices of different colors.  Let $\hpc (k)$ be the problem that accepts
a hypergraph $H$ as input, outputs $1$ if $H$ has a proper $k$-coloring, and outputs $0$ otherwise.

The second part of the proof of the following theorem applies a leaf management result from the preceding section.

\begin{thm}\label{C1}
For all $k\ge 2$, $\hpc (k) \eqws \wf$.
\end{thm}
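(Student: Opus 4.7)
I would prove the two strong Weihrauch reductions $\hpc(k) \lews \wf$ and $\wf \lews \hpc(k)$ separately.

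For $\hpc(k) \lews \wf$, I would build, uniformly from the hypergraph $H=(V,E)$ with vertices $\{v_0, v_1, \ldots\}$ and edges listed as $\langle e_j \rangle_{j \in \nat}$, a tree $T_H$ whose infinite paths encode proper $k$-colorings. A node at level $n$ specifies a partial coloring $\sigma : \{0, \ldots, n-1\} \to k$ augmented with, for each edge $e_j$ with $j < n$ whose intersection with $\{0, \ldots, n-1\}$ already contains at least two vertices, a witness pair $(v, w) \in e_j$ with $\sigma(v) \neq \sigma(w)$. The tree is uniformly computable from $H$; an infinite path through $T_H$ supplies a proper $k$-coloring, and conversely any proper $k$-coloring of $H$ determines an infinite path by providing witnesses as soon as they become available. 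Hence $\wf(T_H) = 1$ exactly when $H$ has no proper $k$-coloring, so $\hpc(k)(H) = 1 - \wf(T_H)$, giving the strong Weihrauch reduction.

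For the reverse reduction $\wf \lews \hpc(k)$, the leaf management results from the previous section take center stage. I would first apply Lemma \ref{B5} to compute the pair $(T^*, \leaf{T^*})$ uniformly from $T$. Part (\ref{B2a}) of Theorem \ref{B2} guarantees that $T^*$ is well-founded iff $T$ is, so it suffices to reduce well-foundedness of $T^*$ to $\hpc(k)$. Given the tree together with its leaves, I would then construct a hypergraph $H_{T^*}$ whose vertex set is derived from the nodes of $T^*$ and whose edges are arranged so that non-monochromatic colorings force a selection, at each level, of a non-leaf node that can be extended in $T^*$; the result is that the proper $k$-colorings of $H_{T^*}$ are in correspondence with the infinite paths of $T^*$. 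Thus $H_{T^*}$ is proper $k$-colorable iff $T^*$ is ill-founded, yielding $\hpc(k)(H_{T^*}) = 1 - \wf(T)$.

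The principal obstacle lies in designing the edge set of $H_{T^*}$. The edges must impose just enough structure to put proper $k$-colorings into correspondence with infinite paths, while being uniformly computable from the pair $(T^*, \leaf{T^*})$. Here the leaf set is indispensable: by Proposition \ref{P3}, identifying the terminal nodes of an arbitrary tree is already as hard as $\lpoh$, so without the leaf information the natural edge constraints distinguishing terminal from extendible nodes could not be listed computably. The tree transform from Lemma \ref{B5} is precisely what makes the construction of $H_{T^*}$ uniformly effective.
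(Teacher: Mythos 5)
Your overall architecture matches the paper's (a tree of partial colorings with witnesses for $\hpc(k)\lews\wf$; leaf management plus a hypergraph gadget for the converse), but both halves have problems. In the first half, the tree $T_H$ as you define it demands the witness too early: you require, as soon as $e_j\cap\{v_0,\dots,v_{n-1}\}$ contains two vertices, a pair $(v,w)\in e_j$ with $\sigma(v)\ne\sigma(w)$ among the \emph{already colored} vertices. This excludes proper colorings whose differently colored pair in $e_j$ appears later in the enumeration. Concretely, take $e_0=\{v_0,v_1,v_2\}$, $e_1=\{v_0,v_3\}$, $e_2=\{v_1,v_3\}$: every proper $2$-coloring has $\sigma(v_0)=\sigma(v_1)$, yet your level-$2$ condition for $e_0$ forces $\sigma(v_0)\ne\sigma(v_1)$, so $T_H$ is finite while $H$ is $2$-colorable and the reduction outputs the wrong bit. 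The repair, which is what the paper does, is to let a node commit at a fixed stage to an \emph{arbitrary} pair of vertices of $e_j$ (not necessarily yet colored), allow the empty commitment only while $e_j$ is not yet seen to contain two vertices, and constrain the coloring to split the committed pair once both members are colored.

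In the second half, routing through $(T^*,\leaf{T^*})$ (equivalently through $\wf_L$) is exactly right, and your explanation of why the leaf set is indispensable is correct; but the passage ``edges arranged so that non-monochromatic colorings force a selection, at each level, of a non-leaf node that can be extended'' states what the gadget must accomplish rather than exhibiting one, and you acknowledge as much by calling the edge set the principal obstacle. This construction is the actual mathematical content of the direction $\wf_L\lews\hpc(k)$: the paper imports it from Davis, Hirst, Pardo, and Ransom, using auxiliary vertices $a_0,a_1,b_0,b_1,s$, a pair of vertices $\sigma_0,\sigma_1$ for each node $\sigma$ of the tree, binary edges tying each $\sigma_1$ to $s$ exactly when $\sigma$ is a leaf, and a large edge $E_\sigma$ collecting the children of each non-leaf $\sigma$, together with a clique modification for $k>2$. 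Without some such explicit edge set and a verification that proper $k$-colorability is equivalent to ill-foundedness (an equivalence of existence is all that is needed, not a bijective correspondence between colorings and paths), the proof of this direction is missing rather than merely sketched.
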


\begin{proof}
To see that $\hpc (k) \lews \wf$, let $H = (V,E)$ be a hypergraph input for $\hpc (k)$.
In the following, we freely conflate vertices and finite collections of vertices with their integer codes.
Build a tree
$T$ by including sequences $\sigma = \langle \sigma_0 , \sigma_1 , \dots , \sigma_m \rangle$ satisfying
the following conditions for each $i\le m$.
\begin{list}{$\bullet$}{}
\item If $i = 2j$, then $\sigma _i$ is a set of two vertices in edge $e_j$, or
$\sigma_i$ is a code for $\emptyset$ and $e_j$ does not contain a pair of
vertices in the list $\{ v_0 , \dots , v_m \}$.
\item  If $i = 2j+1$, then $\sigma_i < k$.  We view this as a color for $v_j$.
\item  The partial coloring of $H$ given by the odd entries of $\sigma$ uses
distinct colors on the pairs of vertices listed in the even entries.
\end{list}
The odd entries of any infinite path in $T$ encode a proper $k$-coloring of $H$.  Also,
any proper $k$-coloring of $H$ can be used to define an infinite path through $T$.
(If $H$ has no edges of cardinality less than $2$, an infinite path can be uniformly
computed from any proper coloring, but this is not necessary for the current argument.)
Thus, $\hpc (k)$ is $1$ for $H$ if and only if
$\wf$ is $0$ for $T$.

By Theorem \ref{B7}, $\wf \eqws \wf_L$, so to complete the proof it suffices
to show that $\wf_L \lews \hpc (k)$.  We will prove this for $k=2$ and indicate
how to modify the argument for larger values of $k$.

Let $T$ be an input for $\wf_L$, that is, a tree with a leaf set.
Emulating the construction from the proof of Theorem 6 of
Davis et al.~\cite{davisetal}, define a hypergraph $H$ as follows.
The vertices of $H$ include the five vertices $ \{ a_0, a_1, b_0, b_1 , s\}$
plus two vertices labeled $\sigma_0$ and $\sigma_1$ for each sequence $\sigma \in T$.
The edges of $H$ consist of
\begin{list}{$\bullet$}{}
\item  $(a_0, a_1 )$, $(a_1, s)$, $(b_0 ,b_1 )$, and $(b_1 , s)$,
\item  $(\sigma_0 , \sigma_1 )$ for every nonempty $\sigma \in T$,
\item  $(\sigma_1 , s)$ if $\sigma$ is a leaf of $T$,
\item $E_\sigma = \{ \sigma_1 \}\cup\{ \tau_0  \mid \tau\in T \land \exists n ~ \tau = \sigma\cat n\}$ if $\sigma\in T$ is not a leaf, and
\item $E_0 = \{a_0, b_0\} \cup \{ \sigma_0 \mid \sigma\in T \land \lh {\sigma } = 1\}$.
\end{list}
$H$ is uniformly computable from $T$ and its leaf set.  Note that the leaf set is used in the third and fourth bullets.
$H$ has a proper $2$-coloring if and only if $T$ has
an infinite path.  For details, see the proof of Theorem 6 of \cite{davisetal}.
Thus $\wf_L \lews \hpc (2)$.  To prove the reduction for larger values of $k$,
modify the construction by adding a complete subgraph on $k-2$ vertices to $H$
and connecting each of its vertices to every vertex of $H$ with an edge consisting of two vertices.
\end{proof}

Parallelization yields the Weihrauch analog of part of the reverse mathematical Theorem 6 of Davis et al.~\cite{davisetal}.

\begin{coro}\label{C2}
For $k\ge 2$, $\wfh \eqws \hpch (k)$.
\end{coro}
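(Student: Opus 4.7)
The plan is to parallelize Theorem \ref{C1} directly. Strong Weihrauch reducibility is preserved under parallelization: a reduction $f\lews g$ witnessed by computable pre- and post-processors $\Phi$ and $\Psi$ induces a reduction $\widehat f\lews\widehat g$ by applying $\Phi$ componentwise to the input sequence and $\Psi$ componentwise to the output sequence. Thus the two Weihrauch reductions in the proof of Theorem \ref{C1} extend to reductions between the parallelizations, yielding $\hpch(k)\eqws \wfh$.

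For the forward direction $\hpch(k)\lews \wfh$, the plan is to take a sequence of hypergraphs $\langle H_i\rangle_{i\in\nat}$ as input and apply the computable construction from the first half of the proof of Theorem \ref{C1} to each $H_i$, producing a sequence of trees $\langle T_i\rangle_{i\in\nat}$ with the property that $H_i$ has a proper $k$-coloring if and only if $T_i$ is not well-founded. Solving the resulting instance of $\wfh$ gives a set $Z$ with $i\in Z$ iff $T_i$ is well-founded, from which the characteristic function of the set of $k$-colorable $H_i$ is computable (by taking the Boolean complement).

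For the reverse direction $\wfh\lews\hpch(k)$, the plan is to first use Theorem \ref{B7} to reduce to $\wfh_L$: given a sequence $\langle T_i\rangle_{i\in\nat}$ input for $\wfh$, apply the sequence version of Lemma \ref{B5} to produce the sequence $\langle T_i^*,\leaf{T_i^*}\rangle_{i\in\nat}$ of trees with leaf sets. Theorem \ref{B2} guarantees $T_i$ is well-founded iff $T_i^*$ is well-founded. Then apply the construction from the second half of the proof of Theorem \ref{C1} to each pair $(T_i^*,\leaf{T_i^*})$, producing a sequence of hypergraphs $\langle H_i\rangle_{i\in\nat}$ such that $H_i$ has a proper $k$-coloring iff $T_i^*$ has an infinite path. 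Solving $\hpch(k)$ on this sequence and complementing bit by bit yields the desired solution of $\wfh$.

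The only subtlety is verifying that the two reductions from Theorem \ref{C1} are genuinely uniform in the input, so that the componentwise parallel application is computable; inspection of the constructions shows that both the tree $T$ built from $H$ and the hypergraph $H$ built from $(T,\leaf{T})$ are produced by fixed computable functionals, so no additional work is required beyond invoking the already-established leaf-management tools.
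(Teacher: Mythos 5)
Your proposal is correct and takes essentially the same route as the paper, which proves the corollary in one line by citing the general fact (Proposition 3.6, part (3), of Brattka, Gherardi, and Pauly) that strong Weihrauch reducibility is preserved under parallelization and applying it to Theorem \ref{C1}. Your componentwise description of the pre- and post-processors, and the explicit detour through $\wfh_L$ via Lemma \ref{B5} and Theorem \ref{B2}, is just an unpacking of that same argument.
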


\begin{proof}
Fix $k$.  By Theorem \ref{C1}, $\wf \eqws \hpc (k)$, so by Proposition 3.6 part (3) of
Brattka, Gherardi, and Pauly \cite{bgp}, $\wfh \eqws \hpch (k)$
\end{proof}

The arguments used in Theorem 6 of \cite{davisetal} can be used to extend Theorem \ref{C1}
and Corollary \ref{C2} to conflict-free colorings.

\subsubsection*{Acknowledgements}

A talk related to this paper was presented at
Dagstuhl
Seminar 18361,
organized by Vasco Brattka, Damir Dzhafarov, Alberto Marcone, and Arno Pauly,
and held September 2-7 of 2018 at Schloss Dagstuhl,
the Leibniz-Zentrum f\"u{}r Informatik \cite{dagstuhl}.
The author's travel to the seminar was supported by
a Board of Trustees travel grant from Appalachian State University.

\begin{bibsection}[Bibliography]
\begin{biblist}

\bib{dagstuhl}{article}{
  author =	{Brattka,Vasco},
  author = {Dzhafarov, Damir},
  author = {Marcone, Alberto},
  author = {Pauly, Arno},
  title =	{{Measuring the Complexity of Computational Content: From Combinatorial Problems to Analysis (Dagstuhl Seminar 18361)}},
  journal =	{Dagstuhl Reports},
  year =	{2018},
  volume =	{8},
  editor =	{Vasco Brattka and Akitoshi Kawamura and Alberto Marcone and Arno Pauly},
  publisher =	{Schloss Dagstuhl--Leibniz-Zentrum fuer Informatik},
  address =	{Dagstuhl, Germany},
  note={To appear},
  }

\bib{bg}{article}{
   author={Brattka, Vasco},
   author={Gherardi, Guido},
   title={Effective choice and boundedness principles in computable
   analysis},
   journal={Bull. Symbolic Logic},
   volume={17},
   date={2011},
   number={1},
   pages={73--117},
   issn={1079-8986},
   review={\MR{2760117}},
   doi={10.2178/bsl/1294186663},
}

\bib{bgp}{article}{
  author={Brattka, Vasco},
   author={Gherardi, Guido},
   author={Pauly, Arno},
   title={Weihrauch Complexity in Computable Analysis},
   year={2017},
   pages={50+xi},
   eprint={arXiv:1707.03202},
}

\bib{davisetal}{article}{
author={Davis, Caleb},
author={Hirst, Jeffry},
author={Pardo, Jake},
author={Ransom, Tim},
title={Reverse mathematics and colorings of hypergraphs},
date={November 29, 2018},
journal={Archive for Mathematical Logic},
DOI={10.1007/s00153-018-0654-z},
}

\bib{fh}{article}{
   author={Friedman, Harvey M.},
   author={Hirst, Jeffry L.},
   title={Weak comparability of well orderings and reverse mathematics},
   journal={Ann. Pure Appl. Logic},
   volume={47},
   date={1990},
   number={1},
   pages={11--29},
   issn={0168-0072},
   review={\MR{1050559}},
   doi={10.1016/0168-0072(90)90014-S},
}

\bib{kmp}{article}{
author={Kihara, Takayuki},
author={Marcone, Alberto},
author={Pauly, Arno},
title={Searching for an analogue of $\text{ATR}_0$ in the {W}eihrauch lattice},
date={2018},
eprint={arXiv:1812.01549}
}

\bib{simpson}{book}{
   author={Simpson, Stephen G.},
   title={Subsystems of second order arithmetic},
   series={Perspectives in Logic},
   edition={2},
   publisher={Cambridge University Press, Cambridge; Association for
   Symbolic Logic, Poughkeepsie, NY},
   date={2009},
   pages={xvi+444},
   isbn={978-0-521-88439-6},
   review={\MR{2517689}},
   doi={10.1017/CBO9780511581007},
}

\end{biblist}
\end{bibsection}

\end{document}